\numberwithin{equation}{section}
\theoremstyle{plain}
\newtheorem{theorem}{Theorem}[section]
\newtheorem{corollary}[theorem]{Corollary}
\newtheorem{proposition}[theorem]{Proposition}
\theoremstyle{definition}
\newtheorem{remark}[theorem]{Remark}
\newtheorem{definition}[theorem]{Definition}
 \definecolor{light-gray}{gray}{0.96}
\journal{Discrete Mathematics}
\title{On the enumeration of $k$-omino towers}
\author{Tricia Muldoon Brown\\
Armstrong State University\\
11935 Abercorn Street, Savannah, GA 31419}
\begin{document}
\begin{abstract}
We describe a class of fixed polyominoes called $k$-omino towers that are created by stacking rectangular blocks of size $k\times 1$ on a convex base composed of these same $k$-omino blocks.  
By applying a partition to the set of $k$-omino towers of fixed area $kn$, we give a recurrence on the $k$-omino towers therefore showing the set of $k$-omino towers is enumerated by a Gauss hypergeometric function.  The proof in this case implies a more general hypergeometric identity with parameters similar to those given in a classical result of Kummer.
\textbf{Keywords.} polyomino, Gauss hypergeometric function, tower, heap
\end{abstract}

\maketitle

\section{Introduction}
If you ever played with LEGO\textsuperscript{\textregistered}s or DUPLO\textsuperscript{\textregistered}s as a child, you probably tried to stack the blocks into large towers, sometimes in a very haphazard way.  An interesting enumerative question immediately arises: How many different towers can be created?  In this manuscript, we fix the orientation and size of the blocks, called {\it $k$-ominoes}, to have unit width and length $k$ units.   We ask, for a given $k$, how many towers may be created using $k$-ominoes?  It turns out, that these questions are related to a larger enumerative question with a rich history.

A \textit{polyomino} is a collection of unit squares with incident sides.  Solomon Golomb is credited with bringing the attention of the mathematical community to polyominoes through an article published in the American Mathematical Monthly in 1954~\cite{Golomb_AMM} and his book on polyominoes~\cite{Golomb} published first in 1965.  Researchers from various disciplines have been interested in polyominoes, from chemists and physicists, to statisticians and recreational mathematicians.  Many polyomino problems lie in the area of mathematical problems that are easy to describe, but often surprisingly difficult to solve.  For example, no formula is known for the number of polyominoes parametrized by area, although asymptotic bounds do exist; see Klarner and Rivest~\cite{Klarner_Rivest_2} for the upper bound and Barequet, Rote, and Shalah~\cite{Barequet_Rote_Shalah} for a recently improved lower bound.  This general case may be unknown, but much work has been done to enumerate classes of polyominoes using parameters such as area, perimeter, length of base or top, number of rows or columns, and others.  

One common technique employed in these types of problems is to associate classes of polyominoes with words in algebraic languages, for example, the classical association between parallelogram polyominoes of perimeter $(2n+2)$ and Dyck words of length $2n$.  Continuing in this tradition are results from  Barcucci, Frosini and Rinaldi~\cite{Barcucci_Frosini_Rinaldi} who use the ECO method to associate polyominoes bounded by rectangles with Grand-Dyck and Grand-Motzkin paths, or the work by Delest and Viennot~\cite{Delest_Viennot}, Domo\c{c}os~\cite{Domocos}, or Castiglione et.\ al.~\cite{Castiglione_etal_2007} who associate classes of polyominoes with words in regular languages.  Another widely used technique is to dissect or decompose the polyomino into smaller parts.  This technique is employed by Wright~\cite{Wright}, Klarner and Rivest~\cite{Klarner_Rivest}, and Bender~\cite{Bender}. More recently, decompositions are used by Duchi, Rinaldi and Schaeffer~\cite{Duchi_Rinaldi_Schaeffer} in the enumeration by inflation of Z-convex polyominoes, and by Fedou, Frosini, and Rinaldi~\cite{Fedou_Frosini_Rinaldi} who utilize decomposition and inclusion-exclusion to count 4-stack polyominoes.  In this paper, we will employ a recursion on the base of a polyomino tower.  Recursions on classes of polyominoes have been used by Barcucci et. al~\cite{Barcucci_Lungo_Fedou_Pinzani} whose recursion is dependent on the rightmost column of a steep staircase polyomino, and Castiglione et.\ al.~\cite{Castiglione_etal_2005} who also use the ECO method to give a recurrence relation on the number of L-convex polyominoes with a given semi-perimeter.  Recent work by Bouvel, Guerrini, and Rinaldi~\cite{Bouvel_Guerrini_Rinaldi} using succession rules and Boussicault, Rinaldi, and Socci~\cite{Boussicault_Rinaldi_Socci} applying a bijection with ordered triples of certain trees and a lattice path provide further examples of strategies to enumerate classes of polyominoes.

Here, we only consider \textit{fixed polyominoes}, also called a fixed animals, which are oriented polyominoes such that different orientations of the same free shape are considered distinct.  Specifically, we study subsets of fixed polyominoes that have been derived from stacked towers of dominoes or $k$-ominoes.  Section~\ref{domino_towers} defines and enumerates the class of polyominoes inspired by domino building blocks, which we call \textit{domino towers}, and Section~\ref{k-omino_towers} generalizes these results to other horizontal $k$-omino blocks in terms of hypergeometric functions with the main result as follows:
\begin{theorem}\label{count_komino_theorem}
The number of $k$-omino towers with area $nk$, $D_k(n)$, is given by
\begin{equation*}
D_k(n)= {kn-1\choose n-1} {}_2F_1 (1, 1-n; (k-1)n+1; -1)
\end{equation*}
where ${}_2F_1(a,b;c;z)$ is the Gauss hypergeometric function and $k, n\geq 1$.
\end{theorem}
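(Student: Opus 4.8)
The plan is to follow the strategy used for domino towers in Section~\ref{domino_towers}, refining it by the size of the base. Partition the set of $k$-omino towers of area $kn$ according to the number $b$ of $k$-omino blocks in the (convex) base, and let $D_k(n,b)$ count the towers with exactly $b$ base blocks, so that $D_k(n)=\sum_{b=1}^{n}D_k(n,b)$. I would prove the refined formula
\[
D_k(n,b)=\binom{kn-1}{n-b}
\]
by induction: the base case $D_k(n,n)=1$ is the single $1\times kn$ strip, and the inductive step is a recurrence read off from the combinatorics of the towers. Granting this refined formula, the theorem is immediate: reindexing by $m=b-1$ gives $D_k(n)=\sum_{m=0}^{n-1}\binom{kn-1}{n-1-m}$, and since $\binom{kn-1}{n-1-m}\big/\binom{kn-1}{n-1}=\dfrac{(1)_m(1-n)_m}{((k-1)n+1)_m\,m!}(-1)^m$, a term-by-term comparison with the Gauss series yields $D_k(n)=\binom{kn-1}{n-1}\,{}_2F_1(1,1-n;(k-1)n+1;-1)$.

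The heart of the argument, and the step I expect to be the main obstacle, is the recurrence for $D_k(n,b)$. The difficulty is that while the base of a $k$-omino tower is a strip, the rows above it need not be convex, so one cannot simply delete the base (or the last base block) and expect a $k$-omino tower to remain. I would instead decompose a tower using the seams of its base at $x=k,2k,\dots,(b-1)k$: if no block crosses a given seam the tower splits cleanly into two shorter $k$-omino towers, so cutting at the first uncrossed seam writes every tower as a ``base-indecomposable'' tower (one in which every interior seam is crossed by some block) followed by a strictly shorter tower, and keeping track of the number of blocks and of base blocks in each piece produces a convolution-type recurrence whose terminal ingredient is the family of towers with a one-block base. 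Once the recurrence is in hand, verifying that the candidate $\binom{kn-1}{n-b}$ satisfies it is routine. (An alternative to aim for is the purely horizontal recurrence $(n-b)\,D_k(n,b)=\big((k-1)n+b\big)D_k(n,b+1)$, proved by a sign-free bijection between base-$b$ towers with a marked non-base block and base-$(b+1)$ towers with a marked element of an explicit $((k-1)n+b)$-element set; together with $D_k(n,n)=1$ this determines all $D_k(n,b)$ and, after summation, the theorem.)

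Finally I would record the hypergeometric identity obtained along the way, namely
\[
\binom{kn-1}{n-1}\,{}_2F_1(1,1-n;(k-1)n+1;-1)=\sum_{i=0}^{n-1}\binom{kn-1}{i}.
\]
When $k=2$ the parameters on the left are $a=1$, $b=1-n$, $c=n+1=1+a-b$, so this is precisely the instance of Kummer's theorem that evaluates the right-hand sum to $4^{\,n-1}$, recovering the domino-tower count of Section~\ref{domino_towers}. For $k\ge 3$ one has $c=(k-1)n+1\ne 1+a-b$, so the identity above is a new evaluation of a ${}_2F_1$ at $-1$ with parameters of Kummer type but shifted, which is the more general identity alluded to in the abstract.
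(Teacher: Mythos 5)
Your reduction of the theorem to the refined count $D_k(n,b)=\binom{kn-1}{n-b}$, followed by the reindexing and the Pochhammer computation that turns $\sum_{b=1}^{n}\binom{kn-1}{n-b}$ into $\binom{kn-1}{n-1}\,{}_2F_1(1,1-n;(k-1)n+1;-1)$, is exactly the structure of the paper (the refined count is its Theorem~\ref{theorem_knb}, and the hypergeometric step is verified correctly). The gap is that you do not actually prove the refined count, and the primary strategy you propose for it does not work. Cutting at the first uncrossed seam of the base is not inverse to concatenating two shorter towers: a left piece whose level-one blocks overhang its base on the right, when juxtaposed with a right piece, produces a tower in which that seam \emph{is} crossed (so it would be classified as indecomposable there), and overhanging blocks of the two pieces can even occupy the same cells. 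Concretely, for $k=2$, $n=3$, $b=2$ the true count is $\binom{5}{1}=5$ (one seam-crossing tower plus four non-crossing ones), while the convolution of one-block-base pieces gives $1\cdot 3+3\cdot 1=6$; so the convolution recurrence is false as stated. Your fallback, the horizontal recurrence $(n-b)D_k(n,b)=((k-1)n+b)D_k(n,b+1)$, is numerically consistent with the binomial formula, but the ``sign-free bijection'' that would prove it is only asserted, not constructed, and that construction is where all the combinatorial difficulty lives.

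For comparison, the paper proves $D_k(n,b)=\binom{kn-1}{n-b}$ by a recurrence in $n$ rather than a convolution or a recurrence in $b$ alone: it realizes the Vandermonde identity $\binom{kn-1}{n-b}=\sum_{i=0}^{k}\binom{k}{i}\binom{k(n-1)-1}{(n-1)-(b+i-1)}$ by deleting a single block at the left edge of the tower. The cases are governed by the overlaps $k_0,k_1,\dots$ between the leftmost first-level block and the base and along the right staircase; depending on whether these overlaps sum to $k$ or fall short, one either deletes the supported squares (merging the unsupported remainder into a longer base) or first slides the base and right staircase leftward, and the multiplicities $\binom{k}{i}$ arise from compositions of $k$ together with the possible slides. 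If you want to salvage your own route, you would need either to repair the seam decomposition by controlling overhangs at the cut (which destroys the clean product structure), or to exhibit the marked-block bijection explicitly; as written, the key lemma is unproved.
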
 

\section{Domino towers}\label{domino_towers}
In order to provide insight on the general case, we begin with the case where $k=2$.  A {\it domino} is a $2$-omino block which is two units in length and has two \textit{ends}, a \textit{left end} and a \textit{right end}.  Similarly, the boundary of the vertical face of a collection of incident domino blocks may also define a fixed polyomino.  In such a collection, a domino is in the \textit{base} if no dominoes are or could be underneath it, and the \textit{level} of a domino will be the vertical distance from the base. Domino towers in terms of their area $2n$ and base of length $2b$ are defined as follows:

\begin{definition} For $n\geq b\geq 1$, an \textit{(n,b)-domino tower} is a fixed polyomino created by sequentially stacking $n-b$ dominoes horizontally on a convex, horizontal base composed of $b$ dominoes, such that if a non-base domino is placed in position indexed by $\{(x,y),(x+1,y)\}$, then there must be a domino in position $\{(x-1,y-1), (x,y-1)\}$, $\{(x,y-1), (x+1,y-1)\}$, or $\{(x+1,y-1), (x+2,y-1)\}$.
\end{definition}

We note, that all dominoes are placed with the same horizontal orientation in space so that the dimensions are two along the horizontal axis and one along the other axis.  Next, we define a \textit{column} of a polyomino or of the corresponding domino tower as the intersection of the polyomino with an infinite vertical line of unit squares.  Further, within a domino tower, a domino $\mathbf{x}$ is \textit{supported} by another domino $\mathbf{y}$ if there is a chain of dominoes $\mathbf{y} = \mathbf{x_0}, \mathbf{x_1}, \ldots, \mathbf{x_m} = \mathbf{x}$ such that the level of $\mathbf{x_i}$ is one less than the level of $\mathbf{x_{i+1}}$ and the intersection of the column set of~$\mathbf{x_i}$ and the column set of~$\mathbf{x_{i+1}}$ is non-empty for all $0\leq i \leq m-1$.  We say a domino~$\mathbf{x}$ is \textit{completely supported} by another domino~$\mathbf{y}$ if the removal of $\mathbf{y}$ from the tower would cause $\mathbf{x}$ to drop at least one level, that is, the domino~$\mathbf{x}$ could not have been stacked in the tower without first placing domino $\mathbf{y}$. 

Before we state the first theorem, we give another interpretation of domino towers in order to describe two constructions on these towers.  Viennot~\cite{Viennot} introduced a class of objects called \textit{heaps} which, as a set, contain the set of domino towers.  We adapt one of Viennot's equivalent definitions of a heap to represent a domino tower as an ordered triple $(E, \leq, \epsilon)$ where $E$ is a finite poset with order relation~$\leq$ and $\epsilon$ is a map from the poset $E$ to the set of basic positions, that is, a map designating a block's horizontal location or column set.  This triple $(E,\leq, \epsilon)$ satisfies two conditions:
\begin{enumerate}
\item Given dominoes $\mathbf{x}$ and $\mathbf{y}$, if $\epsilon(\mathbf{x})$ is {\it concurrent} with $\epsilon(\mathbf{y})$, that is, the column set of $\mathbf{x}$ intersects the column set of $\mathbf{y}$, then either $\mathbf{x} \leq \mathbf{y}$ or $\mathbf{y} \leq \mathbf{x}$.
\item For every $\mathbf{x}$ and $\mathbf{y}$, the relation $\mathbf{x} \leq \mathbf{y}$ implies $\mathbf{y}$ is supported $\mathbf{x}$.
\end{enumerate}   
 
Now, we say a domino $\mathbf{y}$ is \textit{deleted} from a domino tower when $\mathbf{y}$ has been removed from all chains containing it in the poset generated by $\leq$.  We note, order relations among other dominoes in the poset must be redrawn as $\mathbf{x} \leq \mathbf{y} \leq \mathbf{z}$ does not necessarily imply $\mathbf{x}$ supports $\mathbf{z}$ after $\mathbf{y}$ has been deleted.  Further, we {\it grow} a domino tower at position given by $\epsilon(\mathbf{x})$ and level $\ell$, if for all dominoes $\mathbf{y}$ at level $\ell$ such that $\epsilon(\mathbf{x})$ is concurrent with $\epsilon(\mathbf{y})$, we replace the domino $\mathbf{y}$ with the relation $\mathbf{x} \leq \mathbf{y}$ in all chains containing $\mathbf{y}$ in the poset given by $\leq$, that is, we insert $\mathbf{x}$ to be covered by $\mathbf{y}$ in these chains.  In both these constructions, only the poset $E$ is altered in the formation of a new domino tower; neither the order relation $\leq$ nor the map $\epsilon$ change.  Essentially, deleting a domino causes all dominoes that are completely supported by that domino to fall vertically, with no change in their horizontal position, until they come to rest on another domino or the base, and growing a tower at a certain position in the tower causes all dominoes supported by that position to be lifted vertically one level, also with no change in their horizontal position.

We note that in some cases these constructions are inverses; if the dominoes in the deletion of $\mathbf{y}$ fall at most one level, then growing by $\mathbf{y}$ moves the dominoes back up to their original position.  Further, deleting the growth in any domino tower gives the original tower as all the dominoes affected by the growth of a domino $\mathbf{y}$ are consequently completely supported by $\mathbf{y}$.

Now, using parameters area and base, domino towers may be counted with binomial coefficients. 

\begin{theorem}\label{nb_domino_tower}
The number of $(n,b)$-domino towers, $d_b(n)$, is given by $2n-1\choose n-b$ for $n\geq b\geq 1$.
\end{theorem}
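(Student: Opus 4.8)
My plan is to prove the theorem by induction on $n$, via a recurrence obtained by partitioning the set of $(n,b)$-domino towers and applying the \emph{delete} and \emph{grow} constructions. The target recurrence is
\begin{equation*}
d_b(n)=d_{b-1}(n-1)+2\,d_b(n-1)+d_{b+1}(n-1),\qquad n>b\ge 1,
\end{equation*}
together with the initial values $d_b(b)=1$; here $d_{b+1}(n-1)$ is read as $0$ when $b+1>n-1$, and $d_0(m)$ as $d_1(m)$, the latter being realized on the combinatorial side by the left--right reflection of a tower. Granting this, the proof concludes as follows: the only $(b,b)$-tower is the bare base, so $d_b(b)=1=\binom{2b-1}{0}$, and two applications of Pascal's rule give
\begin{equation*}
\binom{2n-1}{n-b}=\binom{2n-3}{n-b}+2\binom{2n-3}{n-b-1}+\binom{2n-3}{n-b-2},
\end{equation*}
so $\binom{2n-1}{n-b}$ obeys the same recurrence with the same initial data, forcing $d_b(n)=\binom{2n-1}{n-b}$.

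To prove the recurrence I would fix an $(n,b)$-tower $T$ with $n>b$, look at its rightmost base domino $R$ (columns $2b-1,2b$, level $0$), and partition the towers according to what \emph{delete} does to $R$. If deleting $R$ disturbs nothing, $T\setminus R$ is an $(n-1,b-1)$-tower whose base is columns $1,\dots,2b-2$; reattaching a base domino on the right is the inverse, so this class is in bijection with the $(n-1,b-1)$-towers. If instead the part of $T$ completely supported by $R$ settles, I would first argue that the settling block at level $1$ is either flush above $R$ (columns $2b-1,2b$) or one column to the right (columns $2b,2b+1$). In the flush case $T\setminus R$ is already an $(n-1,b)$-tower. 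In the overhang case, deleting $R$ leaves a one-column hole at column $2b-1$, which I would repair by sliding the settled component one column to the left, using the text's remark that \emph{grow} and \emph{delete} are mutually inverse when pieces fall at most one level to justify that this operation is well defined and invertible; this again lands on an $(n-1,b)$-tower. Since the two sub-cases give two disjoint copies of the $(n-1,b)$-towers, they account for the coefficient $2$. The towers not covered so far --- where settling forces the base to lengthen, which already happens for $b=1$ when $R$ carries a settling overhang on each side and the two fallen pieces merge into an extra base domino --- I would put in bijection with the $(n-1,b+1)$-towers. Finally I would check that these four classes are pairwise disjoint and exhaust every $(n,b)$-tower with $n>b$.

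The step I expect to be the real work is this partition: the informal ``delete $R$ and let the pieces fall'' must be turned into an honest bijection. The delicate verifications are (i) that each output is again a legal domino tower with a convex base of the stated length --- connectedness of the polyomino must be preserved, and the leftward slide must never collide with the part of $T$ that rests only on the base dominoes $1,\dots,b-1$; (ii) the $(n-1,b+1)$ class, where I expect identifying the correct map for general $b$ (as opposed to the transparent $b=1$ situation) to be the hardest point; and (iii) treating $b=1$ uniformly, so that reading $d_0(n-1)$ as $d_1(n-1)$ corresponds to a genuine reflection of towers rather than a mere numerical identity. In all of these the crux is that, restricted to the subclasses that actually occur, \emph{grow} and \emph{delete} are inverse bijections, so that each block of the partition is a bijective --- not merely surjective --- image of the relevant family of $(n-1,\cdot)$-towers.
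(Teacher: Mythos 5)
Your skeleton --- the recurrence $d_b(n)=d_{b-1}(n-1)+2\,d_b(n-1)+d_{b+1}(n-1)$, the Pascal/Vandermonde identity, and the delete/grow bijections --- is exactly the paper's, but the partition you organize around the single rightmost base domino $R$ cannot be completed, and it breaks precisely at the point you flagged as the hardest. For $b\ge 2$ the only level-one positions completely supported by $R$ are $\{2b-1,2b\}$ (flush) and $\{2b,2b+1\}$ (right overhang); a level-one domino at $\{2b-2,2b-1\}$ also rests on the base domino at $\{2b-3,2b-2\}$ and so does not settle when $R$ is deleted. Hence your three cases --- nothing over column $2b$, flush, overhang --- already exhaust all $(n,b)$-towers, leaving no fourth class to supply the $d_{b+1}(n-1)$ term. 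Equivalently, the overhang class is too big to biject with the $(n-1,b)$-towers: by reflection it has $\binom{2n-3}{n-b-1}+\binom{2n-3}{n-b-2}$ elements versus $d_b(n-1)=\binom{2n-3}{n-b-1}$. A concrete collision for your settle-and-slide map at $n=4$, $b=2$ (base $\{1,2\},\{3,4\}$ at level $0$): the tower with level-one dominoes on $\{2,3\}$ and $\{4,5\}$, and the tower with a level-one domino on $\{4,5\}$ and a level-two domino on $\{3,4\}$, both map to the $(3,2)$-tower with a single level-one domino on $\{2,3\}$ --- in the second tower the level-two piece is completely supported by $R$, settles onto the fallen $\{4,5\}$ piece, and slides along with it.

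The paper escapes this by making the case distinction depend on \emph{both} ends of the base simultaneously. Its classes $B$ (flush at the left end) and $C$ (overhang at the left end \emph{and} no overhang at the right end) each biject with $(n-1,b)$-towers, and the map on $C$ is ``shift the base one unit while keeping the entire superstructure fixed,'' not a delete-settle-slide; the shift is legal exactly because the right end carries no overhang. The leftover class $D$ (overhangs at both ends) is sent to $(n-1,b+1)$-towers by deleting the inner halves of the two outermost level-one dominoes and letting their outer halves drop to bracket and lengthen the base. Finally $b=1$ is treated by a separate direct count, $d_1(n)=3d_1(n-1)+d_2(n-1)$, rather than by your reflection reading of $d_0$. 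So the proposal as written is not a proof and is not repairable within a one-ended case analysis; you need the two-ended distinction to isolate the class that lengthens the base.
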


\begin{proof}
We proceed by induction on $n$ and $b$.  Clearly, the number of $(1,1)$-domino towers is given by ${1\choose 0}=1$, so assume $n>1$.  
Applying a case of the Vandermonde identity, for $n\geq 2$ we have
\begin{equation}\label{domino_case_equation}
{2n-1\choose n-b} = {2(n-1)-1\choose (n-1)-(b-1)}+2{2(n-1)-1\choose (n-1)-b} + {2(n-1)-1\choose (n-1)-(b+1)}.
\end{equation}
In the case where the base of a $n$-domino tower is a single domino, that is $b=1$, the tower could have been created by placing a $(n-1,1)$-domino tower on one of three positions on the base domino (left, right, or middle), or by centering a $(n-1,2)$-domino tower on the single domino base.  Hence the number of $(n,1)$-domino towers is given by three times the number $(n-1,1)$-domino towers plus the number of $(n-1,2)$-domino towers and 
\begin{equation*}
3{2(n-1)-1\choose n-2} +{2(n-1)-1\choose n-3} = {2(n-1)-1\choose n-1} + 2{2(n-1)-1\choose n-2}+ {2(n-1)-1\choose n-3}
\end{equation*}
satisfies the recurrence.

Now, assuming $n\geq b >1$, we will show that the $(n,b)$-domino towers may be built from domino towers of $(n-1)$ blocks and bases of length $b-1$, $b$, and $b+1$.  To begin, we partition the set of $(n,b)$-domino towers into four disjoint sets as follows:
\begin{enumerate}
\item Let $A_{n,b}$ be the set of $(n,b)$-domino towers such that the leftmost domino on the first level does not intersect the column containing the left end of the leftmost domino of the base. 
\item Let $B_{n,b}$ be the set of $(n,b)$-domino towers that have a domino on the first level whose left end intersects the column containing the left end of the leftmost domino in the base.
\item Let $C_{n,b}$ be the set of $(n,b)$-domino towers that have a domino placed on the first level so that its right end intersects the column containing the left end of the leftmost domino of the base and whose left end extends past the base on the left side.  Further, assume the rightmost domino on the first level of the tower does not extend past the base on the right side, that is, the column containing the right end of the rightmost domino on the first level must intersect the base.
\item Let $D_{n,b}$ be the set of $(n,b)$-domino towers that have a domino placed on the first level so that its right end intersects the column containing the left end of the leftmost domino of the base and whose left end extends past the base on the left side.  Further, the rightmost domino on the first level must extend one unit past the base on the right side, that is, the column containing the right end of the rightmost domino on the first level does not intersect the base.
\end{enumerate}
These sets are illustrated in Figure~\ref{disjointsets_fig} in the case of $n=4$ and $b=2$.  
\begin{figure}
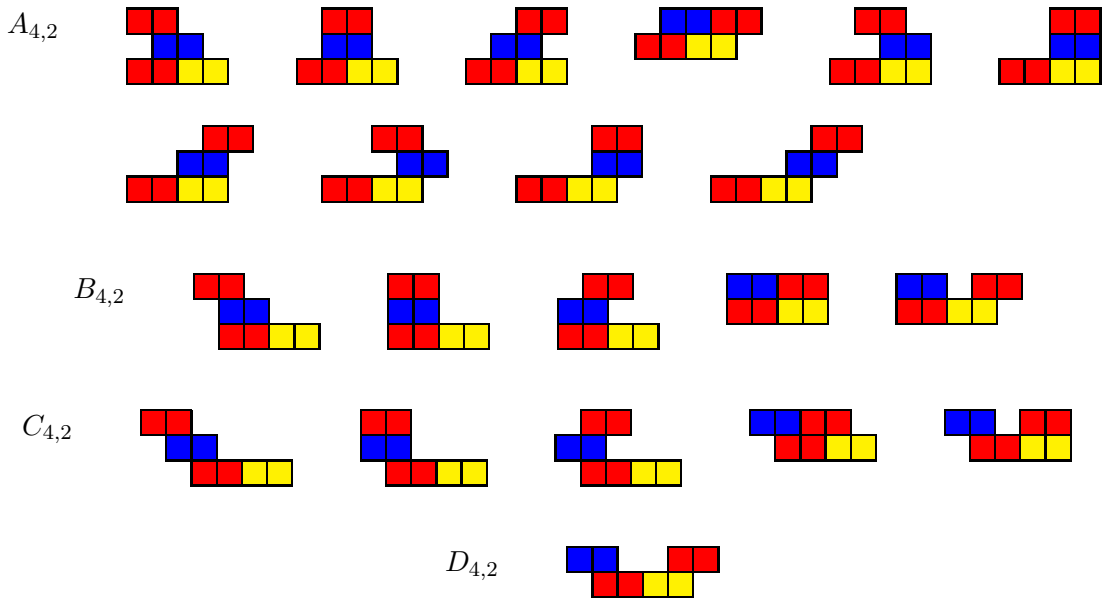

\centering
\ytableausetup{smalltableaux}
$A_{4,2}$
\qquad
\begin{ytableau}
*(red) & *(red)\\
\none & *(blue) &*(blue)\\
*(red) & *(red) &*(yellow) &*(yellow) 
\end{ytableau}
\qquad
\begin{ytableau}
\none&*(red) & *(red)\\
\none & *(blue) &*(blue)\\
*(red) & *(red) &*(yellow) &*(yellow) 
\end{ytableau}
\qquad
\begin{ytableau}
\none & \none &*(red) & *(red)\\
\none & *(blue) &*(blue)\\
*(red) & *(red) &*(yellow) &*(yellow) 
\end{ytableau}
\qquad
\begin{ytableau}
\none & *(blue) &*(blue) &*(red) &*(red)\\
*(red) & *(red) &*(yellow) &*(yellow) 
\end{ytableau}
\qquad
\begin{ytableau}
\none &*(red) & *(red)\\
\none &\none & *(blue) &*(blue)\\
*(red) & *(red) &*(yellow) &*(yellow) 
\end{ytableau}
\qquad
\begin{ytableau}
\none &\none &*(red) & *(red)\\
\none &\none & *(blue) &*(blue)\\
*(red) & *(red) &*(yellow) &*(yellow) 
\end{ytableau}
\vspace{.2in}
\newline
\qquad
\begin{ytableau}
\none&\none & \none &*(red) & *(red)\\
\none &\none & *(blue) &*(blue)\\
*(red) & *(red) &*(yellow) &*(yellow) 
\end{ytableau}
\qquad
\begin{ytableau}
\none&\none  &*(red) & *(red)\\
\none &\none &\none & *(blue) &*(blue)\\
*(red) & *(red) &*(yellow) &*(yellow) 
\end{ytableau}
\qquad
\begin{ytableau}
\none&\none & \none &*(red) & *(red)\\
\none &\none &\none & *(blue) &*(blue)\\
*(red) & *(red) &*(yellow) &*(yellow) 
\end{ytableau}
\qquad
\begin{ytableau}
\none&\none & \none &\none &*(red) & *(red)\\
\none &\none &\none & *(blue) &*(blue)\\
*(red) & *(red) &*(yellow) &*(yellow) 
\end{ytableau}
\vspace{.3in}
\newline


$B_{4,2}$
\qquad
\begin{ytableau}
*(red) & *(red)\\
\none & *(blue) &*(blue)\\
\none &*(red) & *(red) &*(yellow) &*(yellow) 
\end{ytableau}
\qquad
\begin{ytableau}
*(red) & *(red)\\
 *(blue) &*(blue)\\
*(red) & *(red) &*(yellow) &*(yellow) 
\end{ytableau}
\qquad
\begin{ytableau}
\none&*(red) & *(red)\\
*(blue) &*(blue)\\
*(red) & *(red) &*(yellow) &*(yellow) 
\end{ytableau}
\qquad
\begin{ytableau}
*(blue) &*(blue)&*(red) & *(red) \\
*(red) & *(red) &*(yellow) &*(yellow) 
\end{ytableau}
\qquad
\begin{ytableau}
*(blue) &*(blue) &\none &*(red) & *(red) \\
*(red) & *(red) &*(yellow) &*(yellow) 
\end{ytableau}
\qquad
\vspace{.3in}


$C_{4,2}$
\qquad
\begin{ytableau}
*(red) & *(red)\\
\none & *(blue) &*(blue)\\
\none & \none & *(red) & *(red) &*(yellow) &*(yellow) 
\end{ytableau}
\qquad
\begin{ytableau}
*(red) & *(red)\\
 *(blue) &*(blue)\\
 \none & *(red) & *(red) &*(yellow) &*(yellow) 
\end{ytableau}
\qquad
\begin{ytableau}
\none &*(red) & *(red)\\
 *(blue) &*(blue)\\
 \none & *(red) & *(red) &*(yellow) &*(yellow) 
\end{ytableau}
\qquad
\begin{ytableau}
 *(blue) &*(blue)& *(red) & *(red)\\
 \none & *(red) & *(red) &*(yellow) &*(yellow) 
\end{ytableau}
\qquad
\begin{ytableau}
 *(blue) &*(blue)&\none& *(red) & *(red)\\
 \none & *(red) & *(red) &*(yellow) &*(yellow) 
\end{ytableau}
\vspace{.3in}
\newline
$D_{4,2}$
\qquad
\begin{ytableau}
  *(blue)&*(blue) &\none & \none &*(red) & *(red) \\
\none & *(red) & *(red) &*(yellow) &*(yellow) 
\end{ytableau}
\caption{The sets $A_{4,2}$, $B_{4,2}$, $C_{4,2}$, and $D_{4,2}$.}
\label{disjointsets_fig}
\end{figure}

Beginning with a tower from the set $A_{n.b}$, we observe that none of its dominoes are completely supported by the leftmost domino in the base.  Thus, this leftmost base domino can be deleted without affecting the rest of the tower.  Since, the process can be reversed, that is, we can grow any $(n-1,b-1)$-domino tower into a tower from the set $A_{n,b}$ by inserting a domino on level zero directly to the left of the base, the set $A_{n,b}$ is in bijection with the set of $(n-1,b-1)$-domino towers, and thus has cardinality $2n-3\choose n-b$.  

Next, given a tower in the set $B_{n,b}$, we proceed by deleting the leftmost domino from the first level.  As the deleted domino rests directly above a domino on level zero, the dominoes fall at most one level. So, from any $(n-1,b)$-domino tower we can grow a domino tower in the set $B_{n,b}$ by inserting a domino on level one just above the leftmost domino of the base, reversing the process.  Thus, $B_{n,b}$ is enumerated by $2n-3\choose n-b-1$.  

Now, we wish to show that $C_{n,b}$ is also in bijection with $(n-1, b)$-domino towers by constructing a bijection with $B_{n,b}$.  To construct the bijection, first fix all non-base dominoes in a tower from the set $C_{n,b}$, and then shift the base one unit, or half of a domino, to the left.  For example, shifting the base in the $C_{4,2}$ row in Figure~\ref{disjointsets_fig} produces the row $B_{4,2}$ above it.  In the inverse map, the base of a domino towers from the set $B_{n,b}$ are shifted to the right by one unit.  The map is well-defined by construction of $B_{n,b}$ and ${C}_{n,b}$, hence $C_{n,b}$ has cardinality $2n-3\choose n-b-1$.  

Finally, all towers in $D_{n,b}$ have dominoes on the first level extending over the base on both sides.  Thus we can remove the right end of the leftmost domino on the first level and the left end of the rightmost domino of the first level.  The remaining ends of these dominoes will drop down one level to bracket the base.  Finally, to complete the process replace the base of $b$ dominoes bracketed by unit squares with a base of $(b+1)$-dominoes; see Figure~\ref{bijectionD_fig}.   This is equivalent to deleting the leftmost and rightmost dominoes on level one and placing two unit squares on level zero to bracket the base.  This process can also be reversed; we can grow an $(n-1,b+1)$-domino tower by inserting two new dominoes on level one directly above the leftmost and rightmost dominoes of the base.  Then, in the base the left end of the leftmost domino and the right end of the rightmost domino can be removed to form a new base of length $2b$.  This gives a bijection between the set of $(n-1, b+1)$-domino towers and $D_{n,b}$, and consequently, the cardinality of $D_{n,b}$ is $2n-3\choose n-b-2$.  Thus, the claim has been proven.
\begin{figure}
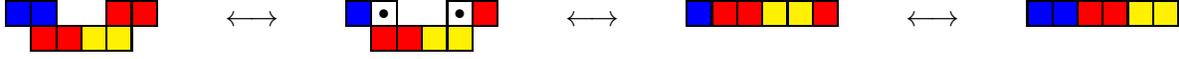

\centering
\ytableausetup{smalltableaux}
\begin{ytableau}
  *(blue)&*(blue) &\none & \none &*(red) & *(red) \\
\none & *(red) & *(red) &*(yellow) &*(yellow) 
\end{ytableau}
\qquad
$\longleftrightarrow$
\qquad
\begin{ytableau}
  *(blue)& \bullet &\none & \none & \bullet & *(red) \\
\none & *(red) & *(red) &*(yellow) &*(yellow) 
\end{ytableau}
\qquad
$\longleftrightarrow$
\qquad
\begin{ytableau}
  *(blue)& *(red) & *(red) &*(yellow) &*(yellow)  &*(red)
\end{ytableau}
\qquad
$\longleftrightarrow$
\qquad
\begin{ytableau}
  *(blue)& *(blue) & *(red) & *(red) &*(yellow) &*(yellow) 
\end{ytableau}
\caption{Illustration of the bijection between $D_{4,2}$ and the set of $(3,3)$-domino towers}
\label{bijectionD_fig}
\end{figure}
\end{proof}

As a consequence of Theorem~\ref{nb_domino_tower}, we state the following corollary.

\begin{corollary}\label{number_domino}
The number of $n$-domino towers is $4^{n-1}$ for $n\geq 1$.
\end{corollary}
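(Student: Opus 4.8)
The plan is to sum the formula of Theorem~\ref{nb_domino_tower} over all possible base lengths. Every $n$-domino tower has a base consisting of some number $b$ of dominoes with $1 \leq b \leq n$, and two towers with bases of different lengths are distinct; hence the sets of $(n,b)$-domino towers for $b = 1, 2, \ldots, n$ partition the set of all $n$-domino towers. Therefore the number of $n$-domino towers equals $\sum_{b=1}^{n} d_b(n) = \sum_{b=1}^{n} \binom{2n-1}{n-b}$.

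Next I would reindex the sum by setting $j = n - b$, so that as $b$ runs from $1$ to $n$ the index $j$ runs from $0$ to $n-1$, giving $\sum_{j=0}^{n-1} \binom{2n-1}{j}$. Since the top index $2n-1$ is odd, the symmetry $\binom{2n-1}{j} = \binom{2n-1}{2n-1-j}$ pairs each term $\binom{2n-1}{j}$ with $j \leq n-1$ to a distinct term $\binom{2n-1}{2n-1-j}$ with $2n-1-j \geq n$, and these two ranges exhaust $\{0, 1, \ldots, 2n-1\}$ without overlap. Consequently $\sum_{j=0}^{n-1} \binom{2n-1}{j} = \tfrac{1}{2}\sum_{j=0}^{2n-1}\binom{2n-1}{j} = \tfrac{1}{2}\cdot 2^{2n-1} = 2^{2n-2} = 4^{n-1}$.

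There is essentially no obstacle here; the only point requiring a word of justification is the partition claim, namely that the base length is a well-defined invariant of a domino tower so that the counts $d_b(n)$ may simply be added. The rest is the standard fact that an odd row of Pascal's triangle splits into two equal halves. One could alternatively note that this corollary also follows from Theorem~\ref{count_komino_theorem} upon specializing $k = 2$, but the direct summation above is the cleaner argument at this stage of the paper.
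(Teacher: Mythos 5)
Your proof is correct and follows the same route as the paper: summing $d_b(n)=\binom{2n-1}{n-b}$ over $b=1,\dots,n$ and evaluating the partial row sum $\sum_{j=0}^{n-1}\binom{2n-1}{j}=2^{2n-2}$. The only cosmetic difference is that you justify this identity directly by the symmetry of an odd row of Pascal's triangle, whereas the paper cites it as identity 1.83 from Gould.
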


This formula is found using the combinatorial identity 1.83 found in Gould~\cite{Gould} given below:
\begin{equation*}
\sum_{k=0}^n {2n+1\choose k} = 2^{2n}
\end{equation*}

We can also explicitly state the linear recurrence utilized in the proof.
\begin{corollary}\label{domino_recurrence}
The linear recurrence on the number of domino towers with $n$ dominoes and base of $b$ dominoes is
\begin{equation*}
d_b(n) = d_{b-1}(n-1) + 2 d_b (n-1) +d_{b+1} (n-1).
\end{equation*}
\end{corollary}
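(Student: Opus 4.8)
The plan is to obtain this recurrence directly from the proof of Theorem~\ref{nb_domino_tower} rather than to argue it afresh. Recall that there, for $n \ge b > 1$, the set of $(n,b)$-domino towers is partitioned into the four disjoint classes $A_{n,b}$, $B_{n,b}$, $C_{n,b}$, $D_{n,b}$, and bijections are produced identifying $A_{n,b}$ with the set of $(n-1,b-1)$-domino towers, both $B_{n,b}$ and $C_{n,b}$ with the set of $(n-1,b)$-domino towers, and $D_{n,b}$ with the set of $(n-1,b+1)$-domino towers. The only real step, then, is to add up the four cardinalities,
\[
d_b(n) = |A_{n,b}| + |B_{n,b}| + |C_{n,b}| + |D_{n,b}| = d_{b-1}(n-1) + 2\,d_b(n-1) + d_{b+1}(n-1),
\]
which is exactly the claimed identity. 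In this sense the corollary is a bookkeeping consequence of a proof already in hand, and I do not expect a genuine obstacle.

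The one point I would treat with a little care is the boundary behaviour of the indices. When $b = n$ the term $d_{b+1}(n-1)$ would count $(n-1,n+1)$-domino towers, of which there are none, so it is read as $0$; this is consistent with the closed form of Theorem~\ref{nb_domino_tower}, since then ${2(n-1)-1 \choose (n-1)-(n+1)} = 0$. The case $b = 1$ is the genuinely exceptional one: the partition into $A_{n,b}, B_{n,b}, C_{n,b}, D_{n,b}$ degenerates (one cannot delete the single base domino), and the proof of Theorem~\ref{nb_domino_tower} instead obtains an $(n,1)$-domino tower either from an $(n-1,1)$-domino tower together with a choice of one of three horizontal placements of the base domino beneath it, or from a centered $(n-1,2)$-domino tower, giving $d_1(n) = 3\,d_1(n-1) + d_2(n-1)$. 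To see this matches the stated recurrence at $b=1$, I would record the convention $d_0(m) := d_1(m)$, which is both forced by and consistent with Theorem~\ref{nb_domino_tower} because ${2m-1 \choose m} = {2m-1 \choose m-1}$.

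Alternatively, and more briefly, one can bypass the combinatorics entirely: by Theorem~\ref{nb_domino_tower} we have $d_b(n) = {2n-1 \choose n-b}$, and the Vandermonde instance \eqref{domino_case_equation} already appearing in the proof of that theorem becomes, after rewriting each binomial coefficient in the $d_b(n)$ notation, precisely $d_b(n) = d_{b-1}(n-1) + 2\,d_b(n-1) + d_{b+1}(n-1)$. In the write-up I would lead with the combinatorial derivation, since it explains \emph{why} the recurrence holds, and then cite \eqref{domino_case_equation} as an immediate algebraic confirmation, spending one sentence on the conventions $d_b(m) = 0$ for $b > m$ and $d_0(m) = d_1(m)$.
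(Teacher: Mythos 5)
Your proposal is correct and matches the paper's intent exactly: the corollary is stated there with no separate proof precisely because it is the recurrence already established by the partition into $A_{n,b}$, $B_{n,b}$, $C_{n,b}$, $D_{n,b}$ (equivalently, by the Vandermonde instance~\eqref{domino_case_equation}) in the proof of Theorem~\ref{nb_domino_tower}. Your added attention to the boundary conventions $d_b(m)=0$ for $b>m$ and $d_0(m)=d_1(m)$ is a small but genuine improvement in precision over the paper's implicit treatment.
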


We apply the recursion to determine the bivariate generating function for the number of $(n,b)$-domino towers.

\begin{proposition}
The bivariate generating function $D_2(x,y)$ for the number of $(n,b)$-domino towers is
\begin{equation*}
D_2(x,y)=\sum_{n\geq 1} \sum_{b=1}^n d_b(n) x^n y^b = \sum_{n \geq 1} \sum_{b=1}^n {2n-1\choose n-b}x^n y^b = \frac{xy+(y-1)\frac{x}{2}\left(\frac{1-\sqrt{1-4x}}{\sqrt{1-4x}}\right)}{1-2x-xy-\frac{x}{y}}.
\end{equation*}
\end{proposition}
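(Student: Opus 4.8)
The plan is to turn the linear recurrence of Corollary~\ref{domino_recurrence} into an algebraic functional equation for $D_2(x,y)$ and then solve that equation. Write $D:=D_2(x,y)$, and regard everything as a formal power series in $x$ whose coefficients are polynomials in $y$ and $y^{-1}$; this is harmless since for each fixed $n$ the index $b$ ranges only over $1\le b\le n$. The recurrence $d_b(n)=d_{b-1}(n-1)+2d_b(n-1)+d_{b+1}(n-1)$ is valid for all $n\ge 2$ and $1\le b\le n$ once one adopts the boundary conventions $d_0(m)=d_1(m)$ and $d_b(m)=0$ for $b>m$. The first convention is exactly what appears in the proof of Theorem~\ref{nb_domino_tower}: a one-domino base contributes the extra copy that turns ``$3d_1(n-1)+d_2(n-1)$'' into the symmetric three-term form. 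Multiplying the recurrence by $x^ny^b$ and summing over $n\ge 2$, $1\le b\le n$, the left-hand side becomes $D-xy$, the subtracted term being the unique $n=1$ contribution $d_1(1)xy=xy$.

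Next I would evaluate the three sums on the right-hand side via the substitution $m=n-1$. The middle sum is immediate: $\sum_{m\ge1}\sum_{b=1}^{m}2d_b(m)x^{m+1}y^{b}=2xD$, the term $b=m+1$ vanishing because $d_{m+1}(m)=0$. The sum coming from $d_{b-1}(n-1)$ reindexes, with $c=b-1$, to $xy\sum_{m\ge1}\sum_{c=0}^{m}d_c(m)x^{m}y^{c}$; splitting off the column $c=0$ and applying $d_0(m)=d_1(m)$ gives $xy\bigl(D+C(x)\bigr)$, where $C(x):=\sum_{m\ge1}d_1(m)x^m=\sum_{m\ge1}\binom{2m-1}{m-1}x^m$. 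Symmetrically, the sum from $d_{b+1}(n-1)$ reindexes, with $c=b+1$, to $\tfrac{x}{y}\sum_{m\ge1}\sum_{c\ge2}d_c(m)x^{m}y^{c}$, and restoring the missing column $c=1$ yields $\tfrac{x}{y}\bigl(D-yC(x)\bigr)$. Equating and rearranging $D-xy=xy(D+C(x))+2xD+\tfrac{x}{y}(D-yC(x))$ gives
\begin{equation*}
D\Bigl(1-2x-xy-\tfrac{x}{y}\Bigr)=xy+x(y-1)C(x).
\end{equation*}

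The one genuine computation is the closed form of $C(x)$: using $\binom{2m-1}{m-1}=\tfrac12\binom{2m}{m}$ for $m\ge1$ together with the classical identity $\sum_{m\ge0}\binom{2m}{m}x^m=(1-4x)^{-1/2}$, one gets $C(x)=\tfrac12\bigl((1-4x)^{-1/2}-1\bigr)=\tfrac12\cdot\tfrac{1-\sqrt{1-4x}}{\sqrt{1-4x}}$, so that $x(y-1)C(x)=(y-1)\tfrac{x}{2}\bigl(\tfrac{1-\sqrt{1-4x}}{\sqrt{1-4x}}\bigr)$; dividing through by $1-2x-xy-\tfrac{x}{y}$ then produces exactly the asserted expression. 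I expect the main obstacle to be bookkeeping rather than depth: the delicate points are keeping the boundary column $b=1$ (and the auxiliary value $d_0(m)=d_1(m)$) straight under the index shifts, and getting the edge corrections in the $d_{b-1}$ and $d_{b+1}$ sums attached with the correct powers of $x$ and $y$ — an off-by-one or sign slip there is precisely what would corrupt the numerator $xy+(y-1)\tfrac{x}{2}\bigl(\tfrac{1-\sqrt{1-4x}}{\sqrt{1-4x}}\bigr)$.
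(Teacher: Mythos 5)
Your derivation is correct and follows essentially the same route as the paper: apply the recurrence of Corollary~\ref{domino_recurrence}, translate it into a functional equation for $D_2(x,y)$ with boundary corrections at $b=1$ (via $d_0(m)=d_1(m)$) and at $b=n$, and evaluate the correction series $\sum_{m\ge1}\binom{2m-1}{m-1}x^m=\tfrac12\bigl(\tfrac{1-\sqrt{1-4x}}{\sqrt{1-4x}}\bigr)$ in closed form. If anything, you are more explicit than the paper about the boundary conventions that make the three-term recurrence uniformly valid, which is where the only real subtlety lies.
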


\begin{proof}
We apply the recurrence of Corollary~\ref{domino_recurrence}.
\small
\begin{eqnarray*}
D_2(x,y)&=&\sum_{n\geq 1} \sum_{b=1}^n d_b(n) x^n y^b \\
&=&\sum_{n\geq 1} {2(n-1)-1\choose (n-1)-(b-1)}x^ny^b+ 2{2(n-1)-1\choose (n-1)-b}x^ny^b+ {2(n-1)-1\choose (n-1)-(b+1)}x^ny^b\\
&=& xy+xyD_2(x,y) + y\sum_{n\geq 1} {2(n-1)-1\choose n-1}x^{n} + 2xD_2(x,y) +\frac{x}{y} D_2(x,y) - \sum_{n\geq 1}{2(n-1)-1\choose n-2}x^{n}\\
\end{eqnarray*}
\normalsize
Because
\begin{equation*}
\sum_{n\geq 1} {2(n-1)-1\choose n-1}x^n =\sum_{n\geq 1} {2(n-1)-1\choose n-2}x^n = \frac{x}{2}\left(\frac{1-\sqrt{1-4x}}{\sqrt{1-4x}}\right),
\end{equation*}
the result follows.
\end{proof}

\begin{remark}
There is another connection between domino towers and polyominoes through a natural bijection given by Viennot~\cite{Viennot} associating directed polyominoes with {\it strict pyramids} of dominoes, that is domino towers that have one domino in the base and the condition that no domino is placed directly above another domino.  The growth constant in this case is 3, as first proven by Dhar~\cite{Dhar}.  However a proof of this result using decompositions and generating functions is given by B\'etr\'ema and Penaud~\cite{Betrema_Penaud} and is further extended by  Bousquet-M\'elou and Rechnitzer~\cite{Bousquet_Rechnitzer} to enumerate larger classes of heaps of dominoes associated with classes of polyominoes whose growth constants are greater than known classes.   As domino towers are a subset of domino heaps one could use this bijection to describe a class of directed polyominoes whose growth constant in terms of area is $(4^{n-1})^{\frac{1}{2n}}= (1/4)^{\frac{1}{2n}}\cdot 2\sim 2$, this is, approximately half of that of the general fixed polyominoes which is estimated around 4.06~\cite{Jensen}.  (This result is obtained through a numerical analysis of the series of fixed polyominoes of limited size.)  However, we will not discuss this any further here as we are interested in the towers themselves as polyominoes.
\end{remark}

\section{$k$-omino towers}\label{k-omino_towers}

We generalize the results of Section~\ref{domino_towers} to horizontal polyominoes of integer length. 

\begin{definition} For $n\geq b\geq 1$, an \textit{(n,b)-$k$-omino tower} is a fixed polyomino created by sequentially placing $(n-b)$ horizontal $k$-ominoes on a convex, horizontal base composed of $b$ $k$-ominoes, such that if a non-base $k$-omino is placed in position $\{(x,y),(x+k-1,y)\}$, then there must be a $k$-omino in one of the $2k-1$ positions between $\{(x-k+1,y-1), (x,y-1)\}$ and $\{(x+k-1,y-1), (x+2k-2,y-1)\}$.
\end{definition}

As before, the orientation of the $k$-omino blocks is fixed.  Further we can easily generalize the notions of columns, support, deletion and growth to $k$-omino pieces.  The following result generalizes Theorem~\ref{nb_domino_tower}.

\begin{theorem}\label{theorem_knb}
The number of $(n,b)$-$k$-omino towers is given by $kn-1\choose n-b$ for $n, b\geq 1$ and $k\geq 1$.
\end{theorem}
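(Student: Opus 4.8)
The plan is to mimic the proof of Theorem~\ref{nb_domino_tower} verbatim, replacing the three-term Vandermonde recurrence by a $(2k-1)$-term one and replacing the four-set partition by a partition into $2k$ sets. First I would record the binomial identity that underlies everything: for $n\geq 2$,
\begin{equation*}
{kn-1\choose n-b}=\sum_{j=0}^{2k-2}{k(n-1)-1\choose (n-1)-(b-1)+j-(k-1)}\binom{?}{?},
\end{equation*}
more precisely the Vandermonde-type expansion ${kn-1\choose n-b}=\sum_{i}{k-1\choose i}{k(n-1)-1\choose n-b-i+1}$ summed against the multiplicities coming from how a non-base $k$-omino may sit on a fixed $k$-omino below it ($2k-1$ horizontal offsets), together with a single ``wider base'' term. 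The bookkeeping here is the analogue of equation~\eqref{domino_case_equation}, and I expect it to be the one genuinely fiddly computation; the cleanest route is probably to split ${k-1\choose i}={k-2\choose i}+{k-2\choose i-1}$ repeatedly, or simply to verify the identity by a direct lattice-path / ballot argument rather than manipulating binomials.

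Next I would handle the base case $b=1$ separately, exactly as in the domino proof: an $(n,1)$-$k$-omino tower is obtained either by placing an $(n-1,1)$-tower in one of $2k-1$ horizontal positions on the single base $k$-omino, or by centering an $(n-1,2)$-tower on it; counting these and applying the $b=1$ instance of the Vandermonde identity gives $\binom{kn-1}{n-1}$. The induction on $n$ then reduces the general statement to the inductive step for $n\geq b>1$.

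For the inductive step I would partition the set of $(n,b)$-$k$-omino towers according to the horizontal position, relative to the left end of the leftmost base $k$-omino, of the leftmost $k$-omino on level one. There are $2k$ cases: its left end can intersect the base-left column at offset $0,1,\dots$ or it can overhang by $1,2,\dots,k-1$ units, and within the overhang cases one further splits (as with $C_{n,b}$ versus $D_{n,b}$) according to whether the rightmost level-one $k$-omino overhangs the base on the right. For the non-overhanging and small-overhang classes one deletes the leftmost base $k$-omino (when no level-one piece is completely supported by it) or the leftmost level-one $k$-omino (when it rests squarely on the base), producing bijections with $(n-1,b-1)$- and $(n-1,b)$-towers respectively; the ``base-shift by $j$ units'' bijections identify the remaining intermediate classes with one another, exactly as $C_{n,b}$ was shifted onto $B_{n,b}$; and the single maximal-overhang class plays the role of $D_{n,b}$, matched with $(n-1,b+1)$-towers by trimming the overhanging ends down onto the base and fattening the base from $b$ to $b+1$ $k$-ominoes. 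Summing the resulting binomial coefficients ${k(n-1)-1\choose\cdots}$ over all $2k$ classes reproduces the right-hand side of the Vandermonde identity, completing the induction.

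The main obstacle I anticipate is not any single bijection — each is the evident $k$-omino lift of the $k=2$ construction — but making the $2k$-fold partition genuinely disjoint and exhaustive, and in particular checking that the deletion/growth operations still fall or lift by at most one level in each of the intermediate overhang classes so that the maps are mutually inverse. Once the partition is set up cleanly and the Vandermonde identity is in hand, the theorem follows by the same induction as Theorem~\ref{nb_domino_tower}.
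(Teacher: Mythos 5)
There is a genuine gap here, and it is structural rather than a matter of bookkeeping. Your partition sends every class to a tower with base $b-1$, $b$, or $b+1$ (with a single ``maximal-overhang'' class playing the role of $D_{n,b}$), but the recurrence that $\binom{kn-1}{n-b}$ actually satisfies is the Vandermonde expansion
\begin{equation*}
\binom{kn-1}{n-b}\;=\;\sum_{i=0}^{k}\binom{k}{i}\binom{k(n-1)-1}{(n-1)-(b+i-1)},
\end{equation*}
whose terms run over bases from $b-1$ all the way up to $b+k-1$ with multiplicities $\binom{k}{i}$. No recurrence reaching only three base sizes with fixed multiplicities can replace this when $k\geq 3$: at the level of generating functions it would force $(1+x)^k$ to be a quadratic polynomial. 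Numerically, for $k=3$ one has $d_2(5)=\binom{14}{3}=364=d_1(4)+3\,d_2(4)+3\,d_3(4)+d_4(4)=165+165+33+1$, and dropping the base-$(b+2)$ term leaves $363$. Your $b=1$ base case fails the same way: $(2k-1)\,d_1(n-1)+d_2(n-1)$ gives $5\cdot 5+1=26$ for $k=3$, $n=3$, whereas $d_1(3)=\binom{8}{2}=28$.

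The missing idea in the paper's argument is that when the leftmost level-one piece $L_1$ meets the leftmost base piece in only $k_0<k$ columns, the deficit $k-k_0$ need not be absorbed by a single right-overhanging level-one piece: it can be spread over an entire right staircase $R_1,R_2,\ldots,R_j$ (the rightmost pieces on levels $1,\ldots,j$), each overhanging the one below by $k_1,\ldots,k_j$ columns with $k_0+k_1+\cdots+k_j=k$, possibly after sliding the base and staircase leftward to achieve equality. Deleting the $k$ supported unit squares distributed over $L_1,R_1,\ldots,R_j$ lets the remainder collapse into a base of $b+j$ pieces, producing an $(n-1,b+j)$-tower, and the multiplicity $\sum_{k_0+\cdots+k_j=k}k_0=\binom{k}{j+1}$, summed over compositions of $k$ into $j+1$ positive parts, supplies exactly the coefficient $\binom{k}{i}$ with $i=j+1$. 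The case $b=1$ then needs one further identity, $\binom{k(n-1)-1}{n-1}=(k-1)\binom{k(n-1)-1}{n-2}$, to replace the nonexistent base-$0$ towers by $k-1$ overhanging placements of an $(n-1,1)$-tower. Your proposal reproduces the $k=2$ proof faithfully, but precisely the features that make $k\geq 3$ different --- the staircase, the compositions of $k$, and the bases up to $b+k-1$ --- are absent.
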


\begin{proof}
Again, we utilize a case of the Vandermonde identity to obtain the equation
\begin{equation}\label{equation_k}
{kn-1\choose n-b} = \sum_{i=0}^{k} {k\choose i} {k(n-1)-1\choose (n-1)-(b+i-1)}.
\end{equation}
First, assume $b\geq 2$.  We will show $k$-omino towers can be built from towers of one less $k$-omino and bases of sizes from $b-1$ to $b+k-1$. 

To define the map, begin with a $(n,b)$-$k$-omino tower.  Let $L_j$ and $R_j$, respectively, represent the leftmost and rightmost, respectively, $k$-ominoes on level $j$ for integers $j\geq 0$. Identify $L_1$, the leftmost $k$-omino on level one of the tower, and $L_0$, the leftmost $k$-omino in the base.  Suppose the column containing the leftmost unit square of $L_0$ does not intersect $L_1$.  Then, analogously to the set $A_{n,b}$ described in the proof of Theorem~\ref{nb_domino_tower}, none of the dominoes in the $k$-omino tower are completely supported by $L_0$.  Thus we may remove $L_0$ to obtain a $(n-1,b-1)$-$k$-omino tower.  

Now assume $L_1$ is completely supported by $L_0$.  Let $1\leq k_0 \leq k$ be the number of columns which intersect both $L_1$ and $L_0$, and for $j\geq 1$, let $k_j$ be the number of columns which intersect both $R_{j}$ and $R_{j-1}$, provided the column through the rightmost square of $R_j$ does not intersect $R_{j-1}$.  Set $k_j=0$ otherwise, that is, if $R_j$ is stacked directly above or to the left of $R_{j-1}$ the value of $k_j$ is zero.  Identify the index $0\leq j\leq k-1$ such that 
 \begin{enumerate}
 \item $k_0 + k_1 + \cdots + k_j \leq k$ and
 \item $k_0 + k_1 + \cdots + k_j +k_{j+1} >k$ or $k_{j+1}=0$.
 \end{enumerate}
If  we have equality, that is, $k_0 + k_1 + \cdots + k_j=k$, our map is defined as follows:  Remove the $k$ unit squares of $L_1$, $R_1, \ldots, R_j$ enumerated by $k_0, k_1, \ldots, k_j$.  For each block $L_1$, $R_1, \ldots, R_j$, these are precisely the squares above the block's supporting $k$-omino one level below.  As the remaining $k(j+1)-k$ unit squares of these $k$-ominoes are unsupported, they fall to level zero leaving $(b+j)k$ unit squares which can be merged and to form a base of $(b+j)$ $k$-ominoes.  As in a deletion, any $k$-ominoes completely supported by $L_1$, $R_1, \ldots R_j$ also fall on this new base, leaving a $(n-1,b+j)$-$k$-omino tower.  Note, this map is an extension of the action on the set $D_{n,b}$ in the domino case.  Further, we observe if $k_0=k$, as in the set $B_{n,b}$ in Theorem~\ref{nb_domino_tower}, the entire block $L_1$ is removed and as $j=0$ in this case the length of the base remains the same.

Otherwise, if  $k_0 + k_1 + \cdots + k_j< k$, we will slide the base as we did with dominoes from the set $C_{n,b}$, but in this more general case we will also slide the right staircase consisting of all $k$-ominoes $R_j$ for $j\geq 1$.  Fix the $k$-ominoes in the base and in the right staircase in relation to one another and slide the blocks to the left $k-(k_0 + k_1 + \cdots + k_j)$ units.  In other words, the function $\epsilon$ on any block from the base or right staircase now takes the block to a new set of columns which are $k-(k_0 + k_1 + \cdots + k_j)$ units to the left of the original position.  For the remaining $k$-ominoes in the tower, $\epsilon$ remains fixed.  However, if $\mathbf{y}$ is a domino in the base or right staircase and $\mathbf{x}$ is a domino that is not in the base or right staircase, if, after the slide, $\mathbf{y}$ and $\mathbf{x}$ are concurrent we must have that $\mathbf{y} \leq \mathbf{x}$, that is, the base and the right staircase slides left and under the other $k$-ominoes.  After the slide, the intersection of $L_0$ and $L_1$ in the new tower measured by $k_0^*$ is such that  $k_0^* + k_1 + \cdots + k_j=k$, and therefore we can find a $(n-1,b+j)$-$k$-omino tower as described above.   See Figure~\ref{k-omino_bijection} for an illustration of this process.  The map is well-defined because if $k_{j+1}\not= 0$, the number of unit squares supporting the block $R_{j+1}$ must be greater than $k-(k_0+k_1+\cdots+k_j)$ in order to satisfy the two conditions above for identifying $k_0, k_1, \ldots, k_j$.

\begin{figure}
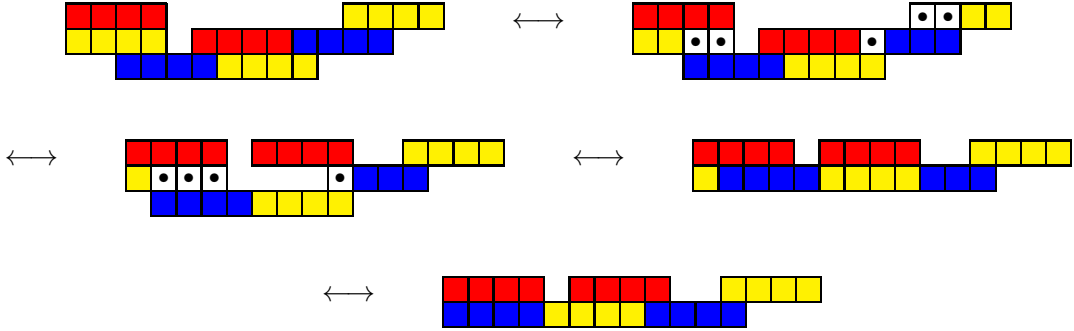

\centering
\ytableausetup{smalltableaux}
\begin{ytableau}
*(red)&*(red) &*(red) &*(red) & \none &\none & \none & \none & \none &\none & \none & *(yellow)& *(yellow) &*(yellow) &*(yellow) \\
*(yellow)&*(yellow) &  *(yellow) &*(yellow) & \none &*(red)&*(red) &*(red) &*(red)  & *(blue)& *(blue)&*(blue)&*(blue)\\
\none & \none & *(blue)& *(blue) & *(blue) & *(blue)& *(yellow)& *(yellow) &*(yellow) &*(yellow)\\
\end{ytableau}
\qquad
$\longleftrightarrow$
\qquad
\begin{ytableau}
*(red)&*(red) &*(red) &*(red) & \none &\none & \none & \none &\none & \none & \none & \bullet & \bullet &*(yellow) &*(yellow) \\
*(yellow)&*(yellow) & \bullet & \bullet & \none &*(red)&*(red) &*(red) &*(red)  &\bullet & *(blue)&*(blue)&*(blue)\\
\none & \none & *(blue)& *(blue) & *(blue) & *(blue)& *(yellow)& *(yellow) &*(yellow) &*(yellow)\\
\end{ytableau}
\qquad
\vspace{.3in}

$\longleftrightarrow$
\qquad
\begin{ytableau}
*(red)&*(red) &*(red) &*(red) & \none  & *(red)&*(red) &*(red) &*(red) &\none & \none & *(yellow)& *(yellow) &*(yellow) &*(yellow)\\
*(yellow)& \bullet& \bullet& \bullet  &\none &\none &\none &\none & \bullet & *(blue)&*(blue)&*(blue)\\
 \none & *(blue)& *(blue) & *(blue) & *(blue)& *(yellow)& *(yellow) &*(yellow) &*(yellow)\\
\end{ytableau}
\qquad
$\longleftrightarrow$
\qquad
\begin{ytableau}
*(red)&*(red) &*(red) &*(red)  & \none &*(red)&*(red) &*(red) &*(red) &\none &  \none & *(yellow)& *(yellow) &*(yellow) &*(yellow)\\
*(yellow)& *(blue)& *(blue) & *(blue) & *(blue)& *(yellow)& *(yellow) &*(yellow) &*(yellow)& *(blue)&*(blue)&*(blue)\\
\end{ytableau}
\qquad
\vspace{.3in}

$\longleftrightarrow$
\qquad
\begin{ytableau}
*(red)&*(red) &*(red) &*(red)  & \none &*(red)&*(red) &*(red) &*(red) &\none &  \none & *(yellow)& *(yellow) &*(yellow) &*(yellow)\\
 *(blue)& *(blue) & *(blue) & *(blue)& *(yellow)& *(yellow) &*(yellow) &*(yellow)& *(blue)&*(blue)&*(blue)&*(blue)\\
\end{ytableau}

\caption{A $(7,2)$-$4$-omino tower, where $k_0=2$, $k_1=1$, and $k_2=2$, mapped to a $(6,3)$-$4$-omino tower.}
\label{k-omino_bijection}
\end{figure}  

Further, given a fixed $(n-1,b+j)$-$k$-omino tower, $T$, the $(n,b)$-$k$-omino towers which map onto $T$ are those with compositions $k_0+k_1+\cdots+k_j=k$ along with those who have slid $k_0-1$ ways for each composition.  Thus the number of such $(n,b)$-$k$-omino towers is given by  
\begin{equation*}
\sum_{k_0+k_1+\cdots+k_j=k} k_0 = {k\choose j+1}, 
\end{equation*}
where the sum is over compositions $k_0+ k_1+\cdots+k_j$ of $k$ into non-zero $j+1$ parts.  The equality follows by a simple inductive argument where
\begin{eqnarray*}
{k\choose j+1} &=& {k-1\choose j+1}+{k-1\choose j} = \sum_{k_0+k_1+\cdots+k_j=k-1} k_0+ \sum_{k_0+k_1+\cdots+k_{j-1}=k-1} k_0\\
& = &\sum_{k_0+k_1+\cdots+(k_j+1)=k} k_0 + \sum_{k_0+k_1+\cdots+k_{j-1}+1=k} k_0 = \sum_{k_0+k_1+\cdots+k_j=k} k_0.
\end{eqnarray*}
because compositions of $k$ into $j+1$ non-zero parts can be partitioned into compositions whose $(j+1)$st part is one and compositions whose $(j+1)$st part is greater than one.
Therefore the set of $(n,b)$-$k$-omino towers can be partitioned into sets which are indexed by $0\leq j\leq k-1$ and determined by the compositions $k_0+k_1+\cdots+k_j$.  Each of these sets maps onto $k\choose j+1$ copies of the set of $(n-1,b+j)$-$k$-omino towers, or equivalently, $k\choose i$ copies of the set of $(n-1, b+i-1)$-$k$-omino towers for $1\leq i \leq k$.

We can illustrate how this map partitions $(n,b)$-domino towers into sets identified by the composition $k_0+k_1+\cdots +k_j=k$ or the inequality $k_0+k_1+\cdots +k_j<k$ as applied to the domino example.  In the proof of Theorem~\ref{nb_domino_tower}, the set $B_{n,b}$ is analogous to the set given by $k_0=2$ with no shift needed.  In this case, when $k_0=k$, the entire block $L_1$ is deleted leaving a domino tower with one less domino and the same length base.  Additionally, the set $C_{n,b}$ is described by $k_0=1<2$ and $k_1=0$ and thus is shifted by one unit.  Finally, the set $D_{n,b}$ has the property $k_0=1$ and $k_1=1$, thus $1+1=2$ and no shift is used.

To check uniqueness, consider the inverse map which takes a $(n-1,b+j)$-$k$-omino tower, $T$, onto $k\choose j+1$ $(n,b)$-$k$-omino towers using compositions $k_0+k_1+\cdots +k_j=k$ and slides from zero to $k_0-1$ squares. To apply a slide of one unit, we slide the base of the $(n,b)$-$k$-omino tower and the blocks $R_1, \ldots, R_j$ one unit to the right, and we see that the value of $k_0$ in the new tower decreases by one, which allows for a corresponding composition $(k_0-1)+k_1+\cdots+k_j+1=k$ with the same shape and $j+2$ parts.  However, this implies the tower $T$ contains the $k$-omino $R_{j+1}$ whose support by $R_j$ after the slide is one unit, and hence before the slide it would not have been supported by the tower.  This contradicts the fact that the base is $(b+j)$ $k$-ominoes, and therefore the compositions $(k_0-1)+k_1+\cdots+k_j+1=k$ must be associated to a $(n-1, b+j)$-$k$-omino tower.  The uniqueness follows similarly for shifts greater than one.  Thus compositions of different sizes produce unique towers.  Furthermore, two compositions of the same size must also produce different towers.  The right staircase given by $R_1, \ldots, R_j$ is unique because the sums $k_1+ \cdots + k_j$ are unique as they represent all compositions of the integers between $j$ and $k-j$ into $j$ parts.

Finally, it is left to consider the case where $b=1$ which was first studied by Durhuss and Eilers~\cite{Durhuss_Eilers2}; see the remark below.  In this case the, results in the proof thus far hold where $L_1$ is completely supported by $L_0$, that is, the leftmost column of $L_0$ intersects $L_1$.  However, we need to consider towers where $L_1$ is not fully supported by $L_0$.  
Because the number of blocks in the base is one, the recursion onto $(n-1)$-$k$-omino towers with base $b=0$ will not correctly enumerate the $k(n-1)-1\choose n-1$ towers described in the formula. We apply the following identity,
\begin{equation*}
{k(n-1)-1\choose n-1} = (k-1){k(n-1)-1\choose n}.
\end{equation*}
Thus, we need to show that set of $(n,1)$-$k$-omino towers where $L_1$ does not intersect the column containing the left end of $L_0$ is in bijection with $(k-1)$ copies of the set of $(n-1,1)$-$k$-omino towers.  This is done by placing a $(n-1,1)$-$k$-omino tower on a single $k$-omino in each of the $(k-1)$ positions so that the base of the $(n-1,1)$-$k$-omino tower hangs over the new $k$-omino base on the right.  In particular, the first two terms of the sum in Equation~\ref{equation_k} give all $(2k-1)$ ways to place a $(n-1,1)$-$k$-omino tower on a base of a single $k$-omino whereas the remaining summands count all towers with two $k$-ominoes on the first level.

Thus, the claim is proven.
\end{proof}

\begin{remark}
If $k=1$, the binomial coefficient ${n-1\choose n-b}$ counts compositions of $n$ into $b$ nonzero parts where the integers in the composition correspond to the number of unit blocks in each column of the $1$-omino tower.  In this case, the total number of $1$-omino towers of area $n$ is
\begin{equation*}
D_1(n) = \sum_{b=1}^{n} {n-1\choose n-b} = 2^{n-1}.
\end{equation*}
Further if $b=1$, these towers were enumerated by Durhuss and Eilers~\cite{Durhuss_Eilers2} using a bijection with strings of 0's and 1's.
\end{remark}

Now, Theorem~\ref{count_komino_theorem} on the number of $k$-omino towers composed of $n$ $k$-ominoes is an immediate consequence of Theorem~\ref{theorem_knb} and the definition of the Gaussian hypergeometric function
\begin{equation*}
_2F_1(a,b;c;z)=\sum_{k=1}^{\infty} \frac{(a)_k (b)_k}{(c)_k} \frac{z^k}{k!}
\end{equation*}
where $(x)_k$ denotes the rising Pochhammer symbol such that $(x)_0=1$ and 
\begin{equation*}
(x)_n=x(x+1)\cdots (x+n-1)
\end{equation*}
 for integers $n\geq 1$.  
Further, Theorem~\ref{count_komino_theorem} introduces an identity on hypergeometric functions that can be generalized to an identity with complex parameters, $\alpha, -\beta$ and the scaled parameter $c=k(\alpha+\beta)+1$ for some positive integer $k$, which, when $k=1$, is equivalent to the classical parameters, $\alpha,\beta$ and $c=\alpha-\beta+1$ of Kummer's Theorem~\cite{Kummer}.
\begin{theorem}
For $\alpha, \beta \in \mathbb{C}$ and $k\in \mathbb{N}_{+}$ where $k\alpha+k\beta+1$ is not zero or a negative integer, we have the hypergeometric identity
\begin{equation*}
{k\alpha+k\beta+\beta \choose \beta} {}_2F_1(\alpha,-\beta;k\alpha+k\beta+1; -1) = \sum_{i\geq 0} {k\alpha+k\beta+\beta \choose \beta-i} \frac{(\alpha)_i}{i!}
\end{equation*}
where $x \choose y$ denotes the extended binomial coefficient $\frac{\Gamma(x+1)}{\Gamma(y+1) \Gamma(x-y+1)}$.
\end{theorem}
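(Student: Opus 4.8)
The plan is to prove the identity by expanding both sides as series in a single summation index $i$ and checking that these series agree term by term. Throughout I would write $p = k\alpha + k\beta$, so that the hypothesis is exactly that $p+1$ is neither $0$ nor a negative integer, and the claim reads
\[
\binom{p+\beta}{\beta}\,{}_2F_1(\alpha,-\beta;p+1;-1) \;=\; \sum_{i\ge 0}\binom{p+\beta}{\beta-i}\,\frac{(\alpha)_i}{i!}.
\]

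First I would rewrite the left-hand side using the definition of the extended binomial coefficient, $\binom{p+\beta}{\beta} = \Gamma(p+\beta+1)/\bigl(\Gamma(\beta+1)\Gamma(p+1)\bigr)$, together with the hypergeometric series $\sum_{i\ge 0}(\alpha)_i(-\beta)_i(-1)^i/\bigl((p+1)_i\,i!\bigr)$. The two elementary inputs I need are $(-\beta)_i(-1)^i = \beta(\beta-1)\cdots(\beta-i+1) = \Gamma(\beta+1)/\Gamma(\beta-i+1)$, which is a valid identity of meromorphic functions for all complex $\beta$ once the falling factorial is read through $\Gamma$, and $(p+1)_i = \Gamma(p+i+1)/\Gamma(p+1)$, which is legitimate precisely because $p+1$ avoids the non-positive integers. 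After substitution the factors $\Gamma(\beta+1)$ and $\Gamma(p+1)$ cancel, and the left-hand side collapses to
\[
\Gamma(p+\beta+1)\sum_{i\ge 0}\frac{(\alpha)_i}{\Gamma(\beta-i+1)\,\Gamma(p+i+1)\,i!}.
\]
Next I would expand the right-hand side the same way: $\binom{p+\beta}{\beta-i} = \Gamma(p+\beta+1)/\bigl(\Gamma(\beta-i+1)\,\Gamma(p+\beta-(\beta-i)+1)\bigr)$, where the crucial point is the collapse $p+\beta-(\beta-i)+1 = p+i+1$ — this is exactly the algebraic reason the identity forces the scaled third parameter $c = k\alpha+k\beta+1$ rather than one chosen independently. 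Hence the right-hand side is also $\Gamma(p+\beta+1)\sum_{i\ge 0}(\alpha)_i/\bigl(\Gamma(\beta-i+1)\,\Gamma(p+i+1)\,i!\bigr)$, matching the left-hand side summand for summand, which finishes the argument.

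The part that will take the most care is not algebraic but a matter of interpreting the infinite series when $\beta\notin\mathbb{Z}_{\ge 0}$. I would first note that the term-by-term equality already yields the identity on the nonempty open set of parameters for which both series converge absolutely; a routine Stirling estimate shows the general term on each side is $O\bigl(i^{-\operatorname{Re}((k-1)\alpha+(k+1)\beta)-2}\bigr)$, so this holds whenever $\operatorname{Re}\bigl((k-1)\alpha+(k+1)\beta\bigr)>-1$. Both sides are meromorphic in $(\alpha,\beta)$ with no poles under the stated hypothesis, so analytic continuation extends the identity to all admissible parameters. Finally, I would remark that in the specialization $\alpha=1$, $\beta=n-1$ with $k$ replaced by $k-1$ — the case arising from $k$-omino towers — the Pochhammer symbol $(-\beta)_i$ and the binomials $\binom{p+\beta}{\beta-i}$ vanish for $i>n-1$, both sides are finite sums of $n$ terms, and the identity degenerates to the evaluation of $\sum_{b=1}^{n}\binom{kn-1}{n-b}$ that underlies Theorem~\ref{count_komino_theorem}, so no convergence subtlety arises in the situation actually used in the paper.
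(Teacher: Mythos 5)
Your proposal is correct and follows essentially the same route as the paper: expand the extended binomial coefficient via Gamma functions, distribute it across the hypergeometric series, and observe that the collapse $k\alpha+k\beta+\beta-(\beta-i)+1=k\alpha+k\beta+i+1$ makes the two sides agree term by term. The convergence and analytic-continuation discussion you add goes beyond the paper's one-sentence justification and is a welcome strengthening, not a deviation in method.
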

\begin{proof}
The proof follows directly by multiplying the extended binomial coefficient, expanded in terms of the Gamma function, across the sum given by the hypergeometric function.
\end{proof}

\noindent \textbf{References} 


\begin{thebibliography}{99}
\bibitem{Barcucci_Frosini_Rinaldi} E. Barcucci, A. Frosini, and S. Rinaldi, {\it On directed-convex polyominoes in a rectangle}, Discrete Math. {\bf 298} (2005), 62--78. 
\bibitem{Barcucci_Lungo_Fedou_Pinzani} E. Barcucci, A. Del Lungo, J.M. F\'edou, and R. Pinzani, {\it Steep polyominoes, $q$-Motzkin numbers and $q$-Bessel functions}, Discrete Math. {\bf 189} (1998), 21--42. 
\bibitem{Barequet_Rote_Shalah} G. Barequet, G. Rote, M. Shalah, {\it $\lambda >4$: An improved lower bound on the growth constant of polyominoes}, Communications of the ACM, {\bf 59}, 7 (2016), 88-95.
\bibitem{Bender} E. Bender, {\it Convex $n$-ominoes}, Discrete Math. {\bf 8} (1974), 219--226. 
\bibitem{Betrema_Penaud} J. B\'etr\'ema and J.G. Penaud, {\it Mod\`eles avec particules dures, animaux dirig\'es, et s\'eries en variables partiellement commutatives}, (1993) , ArXiv Preprint, \url{arXiv:math.CO/0106210}.
\bibitem{Bousquet_Rechnitzer} M. Bousquet-M\'elou and A. Rechnitzer, {\it Lattice animals and heaps of dimers}, Discrete Math. {\bf 258} (2002), 235--274. 
\bibitem{Boussicault_Rinaldi_Socci}A. Boussicault, S. Rinaldi, and S. Socci, {\it The number of directed k-convex polyominoes}, 27th International Conference on Formal Power Series and Algebraic Combinatorics (FPSAC'15), 511-512, Discrete Math. Theoret. Comput. Sci. Proc., BC, Assoc. Discrete Math. Theor. Comput. Sci, Nancy, 2015.
\bibitem{Bouvel_Guerrini_Rinaldi} M. Bouvel, V. Guerrini, and S. Rinaldi, {\it Slicings of parallelogram polyominoes, or how Baxter and Schr\"oder can be reconciled},  28th International Conference on Formal Power Series and Algebraic Combinatorics (FPSAC'16), 287-298, Discrete Math. Theoret. Comput. Sci. Proc., BC, Assoc. Discrete Math. Theor. Comput. Sci, Nancy, 2016.
\bibitem{Castiglione_etal_2005} G. Castiglione, A. Frosini, A. Restivo, and S. Rinaldi, {\it Enumeration of L-convex polyominoes by rows and columns}, Theoret. Comput. Sci. {\bf 347} (2005), 336-352.
\bibitem{Castiglione_etal_2007} G. Castiglione, A. Frosini, E. Munarini, A. Restivo, and S. Rinaldi, {\it Combinatorial aspects of L-convex polyominoes}, Eur. J. Comb. {\bf 28} (2007), 1724-1741. 
\bibitem{Delest_Viennot} M.-P. Delest and G. Viennot, {\it Algebraic languages and polyominoes enumeration}, Theoret. Comput. Sci. {\bf 34} (1984), 169-206. 
\bibitem{Dhar} D. Dhar, {\it Equivalence of the two-dimensional directed-site animal problem to Baxter's hard square lattice gas model}, Phys. Rev. Lett. {\bf 49} (1982), 959-962.
\bibitem{Domocos}V. Domo\c{c}os, {\it A combinatorial method for the enumeration of column-convex polyominoes}, Discrete Math. {\bf 152} (1996), 115--123. 
\bibitem{Duchi_Rinaldi_Schaeffer} E. Duchi, S. Rinaldi, and G. Schaeffer, {\it The number of Z-convex polyominoes}, Adv. Appl. Math. {\bf 40} (2008), 54-72.
\bibitem{Durhuss_Eilers2} B. Durhuus and S. Eilers, {\it Combinatorial aspects of pyramids of one-dimensional pieces of fixed integer length}, 21st International Meeting on Probabilistic, Combinatorial, and Asymptotic Methods in the Analysis of Algorithms (AofA'10), 143-158, Discrete Math. Theoret. Comput. Sci. Proc., AM, Assoc. Discrete Math. Theor. Comput. Sci, Nancy, 2010.
\bibitem{Fedou_Frosini_Rinaldi} J. M. Fedou, A. Frosini, S. Rinaldi, {\it Enumeration of 4-stack polyominoes}, Theoret. Comput. Sci. {\bf 502} (2013) 88-97.
\bibitem{Golomb_AMM}S. Golomb, {\it Checkerboards and Polyominoes}, Amer. Math. Monthly, {\bf 61} (1954), 675--682.
\bibitem{Golomb}S. Golomb, {\it Polyominoes}, Princeton University Press; 2nd edition, Princeton, N.J. (1996). 
\bibitem{Gould} H. W. Gould, Combinatorial Identities, self-published, Morgantown WV (1972). 
\bibitem{Jensen} I. Jensen and A. Guttmann, {\it Statistics of lattice animals (polyominoes) and polygons}, J. Phys. A, {\bf 33} (2000), L257-L263. 
\bibitem{Klarner_Rivest_2} D. Klarner and R. Rivest, {\it A procedure for improving the upper bound for the number of n-ominoes}, Canad. J. Math. {\bf 25} (1973), 585-602. 
\bibitem{Klarner_Rivest} D. Klarner and R. Rivest, {\it Asymptotic Bounds for the Number of Convex $n$-ominoes}, Discrete Math. {\bf 8} (1974), 31--40. 
\bibitem{Kummer} E. E. Kummer, {\it Ueber die hypergeometrische Reihe}, J.\ f\"ur Math. {\bf 15} (1836), 39-83.
\bibitem{Viennot} G. Viennot, {\it Heaps of pieces 1: basic definitions and combinatorial lemmas},  Graph theory and its applications: East and West (Jinan, 1986), Ann. New York Acad. Sci., {\bf 576}, New York Acad. Sci., New York, (1989), 542--570.
\bibitem{Wright} E. M. Wright, {\it Stacks}, Quart. J. Math. {\bf 2} (1968), 313--320. 
\end{thebibliography}
\end{document}